\documentclass[12pt]{amsart}
\usepackage{amssymb}
\usepackage{amsfonts}
\usepackage{mathtools}
\usepackage{amsthm}
\usepackage{enumitem}
\usepackage[url=false,maxbibnames=99,isbn=false,giveninits=true,doi=false]{biblatex}
\newcommand{\NN}{\mathbb{N}}

\newcommand{\QQ}{\mathbb{Q}}

\newcommand{\ZZ}{\mathbb{Z}}

\newcommand{\cL}{\mathcal{L}}

\DeclareMathOperator{\GL}{GL}

\newcommand{\oag}{\mathrm{oag}}

\newcommand{\K}{\mathrm{K}}
\newcommand{\Def}{\mathrm{Def}}

\theoremstyle{theorem}
\newtheorem{theorem}{Theorem}

\newtheorem{lemma}[theorem]{Lemma}

\theoremstyle{remark}
\newtheorem{remark}{Remark}

\title[The Grothendieck ring of a non-divisible OAG is trivial]{The Grothendieck ring of a non-divisible ordered abelian group is trivial}

\author[B.~Boissoneau]{Blaise Boissonneau}
\address{Blaise Boissonneau. Heinrich Heine University Düsseldorf, Faculty of Mathematics and Natural Sciences, Universitätsstr.~1, 40225 Düsseldorf, Germany.}
\email{blaise.boissonneau@hhu.de}
\urladdr{https://blaxapate.github.io/}

\author[M.~Stout]{Mathias Stout}
\address{Mathias Stout, McMaster University, Department of Mathematics and Statistics, 1280 Main Street West, Hamilton, Ontario L8S 4L8, Canada}
\email{stoutm1@mcmaster.ca}
\urladdr{https://sites.google.com/view/mathias-stout}

\author[F.~Vermeulen]{Floris Vermeulen}
\address{Floris Vermeulen. University of Münster, Mathematics Münster, Einsteinstrasse 62, 48149 Münster, Germany}
\email{florisvermeulen.math@gmail.com}
\urladdr{https://sites.google.com/view/floris-vermeulen}

\thanks{We thank Pierre Touchard for many interesting discussions around Grothendieck rings. M.S.\ is supported by McMaster University and the Fields Institute. F.V.\ is supported by the Humboldt Foundation.
The main result of this paper was obtained during the Decidability, Definability and Computability program which took place at the Hausdorff Insitute in Bonn, funded by the Deutsche Forschungsgemeinschaft (EXC-2047/1 -- 390685813).}

\date{\today}

\begin{document}
	
\begin{abstract}
	We consider the model-theoretic Grothendieck ring of definable sets in ordered abelian groups. 
	It is well-known that $\K \QQ \cong \ZZ[T]/(T^2 + T)$ and  $\K \ZZ =0$, but surprisingly little is known about other cases.
	We present a short computation which shows that they all collapse: $\K G = 0$, unless $G$ is divisible.
\end{abstract}

\maketitle
	
We consider an ordered abelian group $G$, equipped with the natural $\cL_{\oag} = \{0,+,-,<\}$-structure. 
By \emph{definable in $G$} we mean $\cL_\oag(G)$-definable (i.e.\ with parameters). 
Write $\Def(G)$ for the collection of all definable sets in $G^n$, for $n \in \NN$.

We study the model-theoretic \emph{Grothendieck ring} $\K G$ of $G$, as introduced by Krajiček-Scanlon in \cite{KS00}.
The underlying group is given by
\begin{itemize}
	\item Generators: $[A]$ for all $A \in \Def(G)$.
	\item Relations: 
	\begin{enumerate}
		\item $[A] = [B]$ if there exists a definable bijection $A \xrightarrow{\sim} B$,
		\item $[A \setminus B]  +[B] = [A] $ for all $A,B \in \Def(G)$ with $B \subseteq A$.
	\end{enumerate}
\end{itemize}
Then $\K G$ is naturally a ring for the multiplication induced by the cartesian product: $[A] \cdot [B] = [A \times B]$.
It has a zero element $[\emptyset] = 0$, and for any $a \in G$ it holds that $[\{a\}] = 1$.
Note that this construction works more generally for any first-order structure.

It is almost immediate to see that $\K \ZZ = 0$. Indeed, we have that $[\ZZ_{>0}] + [\{0\}] = [\ZZ_{>0}]$, by using the translation $x \mapsto x + 1$.
More generally, this argument works for any discretely ordered abelian group. 
On the other hand, it is known that if $G$ is divisible then $\K G = \ZZ[T]/(T^2 + T)$ by results of Kageyama-Fujita~\cite{KF06}. 

Somewhat surprisingly, not much is known about the intermediate case, despite the interest in the Grothendieck rings of adjacent structures, such as valued fields \cite{CH01,Clu04,HK06,CL08,SV25} (e.g.\ for the purposes of motivic integration).
To the best of our knowledge, the only available result in this setting is due to Bhardwaj and Moonen~\cite{BM26}, proving that $\K G$ is always a quotient of $(\ZZ/q\ZZ)[T]/(T^2 + T)$, where $q$ is the largest odd prime dividing $p-1$ for all primes $p$ for which $G$ is not $p$-divisible.
In fact, more is true:

\begin{theorem}\label{th:KG_triv}
	The Grothendieck ring of definable sets of any non-divisible abelian ordered group is trivial: $\K G = 0$.
\end{theorem}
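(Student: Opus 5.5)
The plan is to reduce the theorem to producing enough independent "counting" relations in $\K G$, using only the definable bijections $x \mapsto px$, translations, and the reflection $x \mapsto -x$. Since $G$ is not divisible, fix a prime $p$ with $pG \neq G$ and an element $a > 0$ with $a \notin pG$. The subgroup $pG$ is definable as $\{x : \exists y\, (x = py)\}$, and so is each coset $c + pG$.

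\textbf{Step 1 (basic relations).} The map $x \mapsto px$ is a definable bijection $G \to pG$, and it restricts to a bijection $G_{>0} \to (pG)_{>0}$. Hence $[G] = [pG]$ and $[G_{>0}] = [(pG)_{>0}]$. By the additivity relation this gives
\[ [G \setminus pG] = 0 \quad \text{and} \quad [(G\setminus pG)_{>0}] = 0. \]
Iterating with $p^k$ gives the analogous statements for $p^kG$. Write $P = [(pG)_{>0}]$ and $J = [pG \cap (0,a)]$. Since $x \mapsto -x$ is a definable bijection, $[pG] = 2P + 1$.

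\textbf{Step 2 (coset bookkeeping).} Translating by $\pm a$ compares the positive parts of the cosets $\pm a + pG$ with $(pG)_{>0}$, with an error term coming from $pG \cap (-a,0]$. This yields
\[ [(a+pG)_{>0}] = P + 1 + J, \qquad [(-a+pG)_{>0}] = P - J. \]
One then decomposes $(G \setminus pG)_{>0}$ into the positive parts of the cosets $ka + pG$ for $0<k<p$, together with a remainder.
\begin{itemize}
\item If $[G:pG]$ is finite, the remainder consists of the positive parts of the finitely many other nonzero cosets, each handled the same way, and Step 1 then gives a linear relation among $P$, $1$ and the various terms of type $J$.
\item If $[G:pG]$ is infinite, one must isolate a definable union of cosets, namely $\langle a\rangle + pG = \bigcup_{k<p} (ka + pG)$. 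One then computes its class in two ways: as $p[pG] = p(2P+1)$ via translations, and via its own image under multiplication by $p$, which lands in $p\langle a\rangle + p^2G$.
\end{itemize}

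\textbf{Step 3 (collapse).} The aim is to combine these to obtain a relation of the form $[X] + 1 = [X]$ for some definable $X$, mimicking the argument for $\ZZ$. The natural candidate is $X = (pG)_{>0} \sqcup (\text{suitable cosets})_{>0}$, where the shift by $a$ plays the role of $x \mapsto x+1$ and the point $0 \in pG \cap(-a,0]$ is the extra element. As a fallback, derive enough polynomial relations in $P$ (in particular $P^2 + P = 0$ from the two-dimensional analogues of the above decompositions) to force $1 = 0$. For instance, first show $[G] = 2P+1$ is a unit with $(2P+1)^2 = 1$, then get $(p-1)[G] = 0$ or $p[G] = [G]$ from Step 2, and combine this with the quotient description of Bhardwaj--Moonen to kill the remaining torsion.

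The main obstacle is Step 3 in the infinite-index case. There $(G\setminus pG)_{>0}$ is not a finite union of translates of $(pG)_{>0}$, and the error terms $J$ do not obviously cancel. I would first try to use two-variable sets such as $\{(x,y) : x \in pG,\ 0 < y < x,\ y \equiv a \bmod pG\}$. On such sets both a fibrewise translation and multiplication by $p$ are available, and comparing the two computations should eliminate $J$.
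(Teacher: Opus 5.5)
There is a genuine gap. Step~3 carries all of the content of the theorem, and you leave it open. The fallback you sketch also cannot close it. Everything you extract in Steps~1--2 comes from comparing translates of cosets of $pG$. This gives $P^2=-P$, $[G]^2=1$, and integer relations $N[G]=0$: $N=p^d-1$ or $N=\tfrac{p^d-1}{2}$ when $[G:pG]=p^d$ is finite. In the infinite-index case nothing comes out at all, because your two computations of $[\langle a\rangle+pG]$ both return $p[G]$. The modulus $q$ in the Bhardwaj--Moonen bound is an odd prime dividing $p-1$, so it divides every such $N$. Adjoining $N=0$ to their result therefore adds nothing and can never give $1=0$. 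The underlying problem is visible in your bookkeeping: for odd $p$ the $J$-terms cancel between the cosets $c+pG$ and $-c+pG$, so your relations leave $J$ completely unconstrained.

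The missing idea is to treat $J$ as an interval class and find two relations for it. Via $x\mapsto px$ you have $J=[(0,b)]$ with $b=a/p\in G^{\mathrm{div}}\setminus G$.

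\emph{First relation.} Since $a\notin pG$, you have $P=J+Q$ with $Q=[(pG)_{>a}]$. The same diagonal decomposition and map $(x,y)\mapsto(x-y,y)$ that give $P^2=-P$ also give $Q^2=2PQ+Q$. Hence $J^2=(P-Q)^2=Q-P=-J$.

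\emph{Second relation.} Choose $p$ \emph{minimal} with $pG\neq G$, so that $G$ is $(p-1)$-divisible. Since $a\in G$, translation gives $[(0,a)]=-1$. The point $(p-1)b$ is not in $G$, so you can split $(0,a)$ there. The definable bijections $x\mapsto x/(p-1)$ and $x\mapsto a-x$ send the two pieces onto $(0,b)$, giving $-1=2J$.

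Combining the two relations gives $1=4J^2=-4J=2$, hence $1=0$. This is exactly the paper's argument: your $J$ is its $T_b$. The second relation is what your restricted toolkit (multiplication by $p$, translations, reflection) does not reach. For $p=2$ your own Step~2 already yields it, since $a+2G=-a+2G$ forces $P+1+J=P-J$. For odd $p$ the route goes through division by $p-1$, which is why $p$ must be taken minimal.
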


This theorem follows from the calculations in Lemma~\ref{le:compute_ring} below, which builds upon the work in~\cite{KF06} and \cite{BM26}. 
We present these calculations in full to keep this note self-contained.

If $G$ is any ordered divisible abelian group, we write $G^{\mathrm{div}}$ for its divisible closure. 
For $a,b \in G^{\mathrm{div}} \cup \{\pm \infty\}$, we write $(a,b)$ for the open interval $\{x \in G \mid a < x < b \}$. 
Note that this is an $\cL_\oag(G)$-definable set even when $a,b \in G^{\mathrm{div}} \setminus G$.
We also name two types of special classes inside $\K G$, for $a \in G^{\mathrm{div}}_{>0}$:
\begin{enumerate}
	\item $T_a \coloneqq [(0,a)]$,
	\item $S_a \coloneqq [(a,\infty)]$.
\end{enumerate}
\begin{lemma}\label{le:compute_ring}
	The following identities hold in $\K G$, for any ordered abelian group $G$,
	\begin{enumerate}
		\item \label{it:s2} $S_a^2 = 2 S_0 S_a + S_a$ for all $a \in G^{\mathrm{div}}_{>0}$,
		\item \label{it:t2} $T_a^2 = - T_a$ for all $a \in G^{\mathrm{div}}_{> 0} \setminus G$,
		\item \label{it:Ta} $T_{a} = - 1$ for all $a \in G_{>0}$.
	\end{enumerate}
	Moreover, if there exists a minimal prime $p$ for which $G$ is not $p$-divisible, then
	\begin{enumerate}[resume]
		\item \label{it:Tb} $2 T_{b} = -1$ if $b \in G^{\mathrm{div}}_{>0} \setminus G$, and $pb \in G$.
	\end{enumerate}
\end{lemma}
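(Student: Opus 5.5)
We prove the identities in turn. Each one comes from cutting a set into pieces and moving pieces by definable bijections: translations by elements of $G$, reflections $x \mapsto g - x$ with $g \in G$, and linear maps such as $(x,y) \mapsto (x, y-x)$. Throughout we use that a point of $G^{\mathrm{div}} \setminus G$ never lies in $G$. So whenever such a point cuts an interval, nothing is lost at the cut.

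For (\ref{it:s2}), split $(a,\infty)^2$ along the diagonal into $\{x<y\}$, $\{x=y\}$ and $\{x>y\}$. The diagonal has class $S_a$, and the two triangles are swapped by $(x,y)\mapsto(y,x)$. The map $(x,y)\mapsto(x,y-x)$ is a definable bijection from $\{a<x<y\}$ onto $(a,\infty)\times(0,\infty)$, so that triangle has class $S_aS_0$. Hence $S_a^2 = 2S_0S_a + S_a$.

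For (\ref{it:Ta}), let $a\in G_{>0}$ and write $(0,\infty) = (0,a)\sqcup\{a\}\sqcup(a,\infty)$. Translation by $a$ gives $(0,\infty)\cong(a,\infty)$, so $S_0 = S_a$. The decomposition then reads $S_0 = T_a + 1 + S_0$, and cancelling in the additive group gives $T_a = -1$.

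For (\ref{it:t2}), let $a \notin G$ and run the same kind of count on $(0,a)^2$, now cutting by both the diagonal and the antidiagonal $x+y=a$. The antidiagonal contains no points of $G^2$, since $x+y\in G$ and $a \notin G$. Let $\Delta = \{0<x<y<a\}$. Splitting along the diagonal gives $T_a^2 = T_a + 2[\Delta]$. Splitting along the antidiagonal gives $T_a^2 = [L]+[U]$, where $L=\{x,y>0,\ x+y<a\}$ and $U=\{x,y<a,\ x+y>a\}$. The shear $(x,y)\mapsto (x,x+y)$ shows $L\cong\Delta$, since its image is $\{0<x<w<a\}$. We therefore need one more relation identifying $U$ with pieces of $\Delta$. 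The natural candidate is to refine $\Delta$ by the antidiagonal and compare the pieces with $U$ via shears and swaps, following the Kageyama--Fujita computation; the difficulty is that the reflection $x\mapsto a-x$ is not available. \textbf{This is the step I expect to be the main obstacle}: finding the right definable bijections without that reflection, such that the resulting linear relations force $[\Delta] = -T_a$. If successful, $T_a^2 = T_a - 2T_a = -T_a$.

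For (\ref{it:Tb}), the key is minimality of $p$. For every integer $m\ge 1$ all of whose prime factors are $<p$, $G$ is $m$-divisible and torsion-free, so $x\mapsto mx$ is a definable bijection $G\to G$. It maps $(0,b)$ onto $(0,mb)$, so $T_{mb}=T_b$ for all $1\le m\le p-1$.

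Now $pb\in G$ while $(p-1)b\notin G$. Indeed, $kb\in G$ for $0<k<p$ would give $b\in G$: by B\'ezout, $1 = uk + vp$ for some integers $u,v$, so $b = u(kb) + v(pb) \in G$. Hence
$$(0,pb) = (0,(p-1)b)\sqcup((p-1)b,pb),$$
and the reflection $x\mapsto pb-x$ maps $((p-1)b,pb)$ onto $(0,b)$. This gives $T_{pb} = T_{(p-1)b} + T_b = 2T_b$; the case $p=2$ is just $(0,2b)=(0,b)\sqcup(b,2b)$. Finally, (\ref{it:Ta}) applied to $pb\in G_{>0}$ gives $T_{pb}=-1$, so $2T_b=-1$.
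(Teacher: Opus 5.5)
\textbf{Gap: identity (2) is not proved.} Your proofs of (1), (3) and (4) are correct and follow the paper's. In (4) you also supply the B\'ezout check that $(p-1)b\notin G$, which the paper uses silently. For (2), however, you reduce the claim to $[\Delta]=-T_a$ for $\Delta=\{0<x<y<a\}$ and then leave this open. So (2) remains unproved, and (2) is exactly what the theorem needs to get $4T_b^2=-4T_b$. The antidiagonal decomposition is a detour. The way around the missing reflection $x\mapsto a-x$ is not to look for cleverer bijections inside the box $(0,a)^2$. Instead, leave the box and work with unbounded intervals, where (1) already does all the work.

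\textbf{The missing step.} Since $a\notin G$, you have $(0,\infty)=(0,a)\sqcup(a,\infty)$ with no point lost, so $T_a=S_0-S_a$. Your argument for (1) never uses $a>0$, so it also gives $S_0^2=2S_0^2+S_0$, i.e.\ $S_0^2=-S_0$. Expanding in the ring,
\[
T_a^2=S_0^2-2S_0S_a+S_a^2=-S_0-2S_0S_a+2S_0S_a+S_a=S_a-S_0=-T_a,
\]
which is the paper's proof. The same idea also closes your own setup. The set $\{0<x<y\}$ has class $S_0^2$ via $(x,y)\mapsto(x,y-x)$. Because $x,y\neq a$, $\Delta$ is the complement in $\{0<x<y\}$ of $\{0<x<a<y\}\sqcup\{a<x<y\}$, whose pieces have classes $T_aS_a$ and $S_aS_0$. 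Hence
\[
[\Delta]=S_0^2-(S_0-S_a)S_a-S_0S_a=-S_0-2S_0S_a+S_a^2=S_a-S_0=-T_a,
\]
and therefore $T_a^2=T_a+2[\Delta]=-T_a$.
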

\begin{proof}
	(\ref{it:s2}) We break up $(a,\infty)^2$ into
	\[ \{ (x,y) \mid  x>y>a \} \sqcup \{ (x,x) \mid x > a \} \sqcup \{ (x,y) \mid y > x > a \},\]
	and then use the definable bijection
	\[ \begin{array}{ccc}
		\{ (x,y) \mid  x>y>a \} &\rightarrow&(0,\infty)\times(a,\infty) ,\\
		(x,y)&\mapsto&(x-y,y).
	\end{array}  \]

	(\ref{it:t2}) We use (\ref{it:s2}) to compute
	\begin{align*}
		T_a^2 	&= (S_0 - S_a)^2 \\
		&= S_0^2 - 2 S_0 S_a  + S_a^2 \\
		&= -S_0 - 2 S_0 S_a + 2 S_0 S_0 + S_a \\
		&= S_a - S_0 \\
		&= - T_a. 
	\end{align*}
	
	(\ref{it:Ta}) We observe that
	\[[(0,\infty)] =  [(0,a)] + [\{a\}] + [(a,\infty)],\]
	where $(a,\infty)$ is definably isomorphic to $(0,\infty)$ since $a \in G$.
	
	(\ref{it:Tb}) Note that
	\begin{align*}
		-1 	&= [(0,pb)]  \\
		&= [(0,(p-1)b)]  + [((p-1)b, p b)] \\
		&= [(0,b)] + [(-b,0)] \\
		&= [(0,b)] + [(0,b)],
	\end{align*}
	where we used that $x \mapsto x/(p-1)$ and $x \mapsto x - pb$ are $\cL_\oag(G)$-definable bijections.
\end{proof}
\begin{proof}[Proof of Theorem~\ref{th:KG_triv}]
	Let $G$ be non-divisible. By Lemma~\ref{le:compute_ring}(\ref{it:Tb}) there exists some $b \in G^{\mathrm{div}}\setminus G$ such that $2T_b = -1$.
	We therefore have that $4T_b^2 = 1$.
	On the other hand, relation (\ref{it:t2}) shows that $4T_b^2 = -4T_b = 2$.
	Hence $2=1$ which shows that $1 = 0$.
\end{proof}

Triviality of the Grothendieck ring precisely means that the onto-pigeonhole principle fails, see~\cite[Cor.\,3.2]{KS00}. In other words, there exists a definable set $X$ such that $X$ is in definable bijection with $X \sqcup \{0\}$.
The computations in Lemma~\ref{le:compute_ring} can be traced back to show that this holds for
\[  X = \bigsqcup_{i =1}^6 (0,\infty)^2 \sqcup \bigsqcup_{i=1}^8 (0,\infty) \times (b,\infty), \]
where $b \in G^{\mathrm{div}}_{>0} \setminus G$, but $p b \in G$ where $p$ is the minimal prime such that $G$ is not $p$-divisible.

\begin{remark}
	Although the Grothendieck rings of all ordered abelian groups are now known, many interesting challenges remain.
	One could rather attempt to compute the Grothendieck semiring, which amounts to classifying all definable sets up to definable bijections.
	This seems out of reach for general ordered abelian groups as it is already nontrivial in the discrete~\cite{CH18} and divisible case~\cite{vdD98}.
	A more reasonable challenge would be to consider $\mathrm{dp}$-minimal groups of rank 1.
	
	In another direction, there are many variants of $\K G$ that come up naturally, e.g.\ in the context of motivic integration (\cite{HK06,SV25}) whose full structure is not yet known.
	Essentially, these variants are given by restricting the notion of morphism. We mention three cases of interest:
	\begin{enumerate}
		\item \label{it:KG[*]} The graded Grothendieck semiring $\K G[*]$, where one only considers definable maps between subsets of the same ambient dimension.
		\item \label{it:GLn} The ring obtained by further restricting to piecewise $\GL_n(\ZZ) \ltimes G^n$-maps. 
			More generally, one may replace $\ZZ$ by any subring of $\QQ$ generated by inverting some set of primes over $\ZZ$.
		\item Measured versions of the above rings: the basic objects are now pairs $(X,\mu)$ consisting of a definable set and a definable map $\mu \colon X \to G$. A morphism $f \colon (X,\mu) \to (Y,\omega)$ consists of a definable bijection $f \colon X \to Y$ which is piecewise $\GL_n(\ZZ) \ltimes G^n$ and which is measure-preserving, meaning that
			\[x_1 + \dots + x_n + \mu(x) = f_1(x) + \dots + f_n(x) + \omega(x) .\] 
	\end{enumerate}
\end{remark}

\printbibliography	
\end{document}